\newtheorem{Theorem}{Theorem}
\newtheorem{Cor}{Corollary}
\theoremstyle{remark}
\newtheorem{rmk}{Remark}
\theoremstyle{definition}
\newtheorem{defn}{Definition}
\newtheorem{ex}{Example}
\title{Structure on the Top Homology and Related Algorithms}
\date{}
\author{Nissim Ranade, Chandrika Sadanand and Dennis Sullivan}
\begin{document}

\maketitle

\begin{abstract}

\noindent We explore the special structure of the top-dimensional homology of any compact triangulable space $X$ of dimension $d$. Since there are no $(d+1)$-dimensional cells, the top homology equals the top cycles and  is thus  a free abelian group. There is no obvious basis, but we show that there is a canonical embedding of the top homology into a canonical free abelian group which has a natural basis up to signs. This embedding structure is an invariant of $X$ up to homeomorphism. This circumstance gives the top homology the structure of an (orientable) matroid, where cycles  in the sense of matroids correspond to the cycles  in the sense of homology. This adds a novel topological invariant to the topological literature.

\medskip\noindent We apply this matroid structure on the top homology to give a polynomial-time algorithm for the construction of a basis of the top homology (over $\mathbb{Z}$ coefficients).

\end{abstract}

\section{Introduction}

\medskip\noindent This paper provides, for a triangulable  space, a cubic (in the number of cells) time algorithm which finds a basis of cycles in the top dimension, with $\mathbb{Z}$ coefficients. Since each top homology class is represented by a unique cycle, this basis of cycles is also a basis for the top homology. The algorithm takes advantage of an oriented matroid structure on the set of components of the oriented top-dimensional  pseudo-manifold subsets, which we call \emph{strata}. The elements of the basis produced are minimally supported on the strata. This follows from a topological picture of $X$, showing the entire structure is a homeomorphism invariant of $X$.

\medskip\noindent We  begin with a motivating  example.

\begin{ex}
\label{graph}
Consider the problem of finding cycles in a finite graph. A straight-forward approach would be to look at all subsets of edges and see which subsets are cycles. This however, could be very slow, since the time it takes is exponential in the number of edges. The following is a more efficient way of finding cycles in the graph $G$.

\medskip\noindent Start with two empty sets $M$ and $S$. Elements will be added to these sets so that $M$ is an acyclic set of edges (a tree) and $S$ is a set of cycles.

\medskip\noindent Consider the vector space $V$ generated by the vertices of $G$. Note that $M$ does not contain a cycle if and only if $\{ \partial e \mid e \in M\} \subset V$ is linearly independent. If a set of boundary vectors $\{\partial e_1, \ldots \partial e_n \}$ is linearly dependent with dependence relation $\sum a_i \partial e_i=0$, then $\sum a_i e_i$ forms the corresponding cycle in the graph (edges counted with multiplicity). Whitney generalized this observation  when he defined the notion of a \emph{matroid} \cite{whitney}.

\medskip\noindent Now consider the edges of $G$, one at a time.
  \begin{itemize}
  \item For an edge $e$ if $\{e\} \cup M$ has a cycle, then $\{\partial e\} \cup \{\partial f \mid f \in M\} \subset V$ has a minimal linear dependence relation. Add the corresponding cycle of edges to $S$.
 \item For an edge $e$, if $\{e\} \cup M$ does not contain a cycle, we add $e$ to $M$.
  \end{itemize}

\medskip\noindent Once the above process is repeated for all the edges of the graph, we have a maximal tree $M$ and a basis of cycles $S$.

\begin{figure}[h]
\begin{center}
\includegraphics[width=4in]{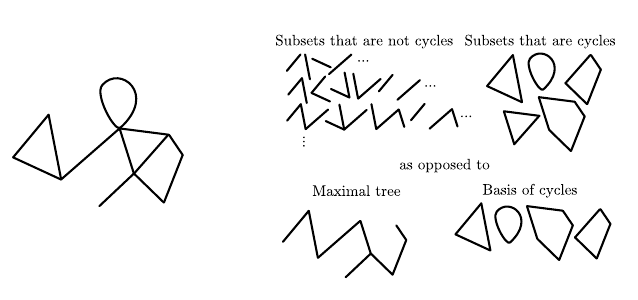}
\end{center}
\end{figure}
\end{ex}

\medskip\noindent This algorithm is $O(n^3)$ in the number of edges instead of exponential. Every cycle in the graph is a sum of directed cycles found by the algorithm. 

\medskip\noindent We generalize this method for finding cycles in graphs to finding top-dimensional cycles in cell complexes, using a matroid. In an effort to save time, in the example above, it is natural to amalgamate  pairs of edges that share a two-valent vertex. In our higher-dimensional context, the analogue of this amalgamation is to form the \textit{strata} mentioned earlier. Note that if the graph is weighted, then ordering the cells by increasing weight results in a minimum weight spanning tree. Our algorithm finds a higher-dimensional analogue of a minimum weight spanning tree.

\medskip\noindent Matroids were first defined by H. Whitney in 1935 \cite{whitney} to generalize the notion of linear dependence and cycles in graphs. Oriented matroids were introduced later around 1975 \cite{blandlasvergnas}, \cite{folkmanlawrence}. Both have since been used to advance the field of linear programming by showing that certain optimization problems can be solved using the greedy algorithm \cite{matroidtheory}. The matroid structure on the set of homology classes has been used extensively \cite{Greedy-optimal-homotopy-2005,Efficient-algorithms-2018,ChenFreedman}. The third author learned from Ralph Reid at MIT in the early 70's, that the top homology had a further geometric structure expressible in the language of matroids. In this paper, we prove the existence of a matroid structure on a more geometric set: the set of strata. We give an algorithm that uses this structure to compute the top dimensional homology in cubic time.


\medskip\noindent Computing the homology of a finite cell complex reduces to computing the Smith normal form of the boundary matrix \cite[Section 1.11]{Munkres}. However, this computation cannot be completed in a reasonable amount of time when there is a large number of cells, so several algorithms have been developed to reduce the number of cells in a complex while preserving its homology. This can be done by producing a large acyclic subcomplex (like the tree in Example \ref{graph}) and looking at the relative homology, and by amalgamating pairs of cells that form the coboundary of a third cell \cite{FIdF}. Our  amalgamation of top dimensional cells into strata is maximal in a sense. Any further amalgamation (amalgamating two strata) would not, in general, preserve the top dimensional homology. Our algorithm also produces a maximal acyclic set of strata, akin to $M$ in Example \ref{graph}. The matroid structure on the set of strata can be harnessed via the greedy algorithm to compute homology without using the Smith normal form.



\begin{rmk}  

The strata and their boundaries give a chain complex which, like the matroid we discuss, is a topological invariant of the CW complex. In dimension two, it appears that this chain complex can be refined to a non abelian version which  gives a complete topological invariant for a class of  two dimensional spaces called \emph{taut complexes} (intuitively, where the attaching maps of strata are locally injective). Taut two complexes exist in every possible homotopy type of connected two complexes. Further details can be found in Remark \ref{rmk:taut}.

\end{rmk}

\medskip\noindent We give preliminary definitions in Section \ref{definitions}. In Section \ref{matroid structure}, we define the notion of a stratum, prove the existence of a chain complex and a matroid structure based on strata, and show that they are topological invariants. Finally in Section \ref{algorithm} we give a cubic-time algorithm, using the invariants of the previous section, that computes a homology basis consisting of minimally supported cycles.

\paragraph*{Acknowledgements}
We would like to thank Daniel Whalen for enlightening conversations regarding time complexity, and Karen Marinez for her indispensable administrative help. The second author acknowledges partial support from ISF grant 1794/14.

\section{Preliminary definitions}
\label{definitions}

The following definitions will be useful in the next sections.

\begin{defn}
By \emph{coordinate space} we mean a finite dimensional real vector space $V$ with a choice of $\text{dim}(V)$ co-dimension one subspaces in general position. By general position we mean that the dimension of the intersection of any $n$ of these hyperplanes is $\text{dim}(V)-n$.
\end{defn}

\begin{defn} \cite{Steenrod67} A \emph{regular cell complex} is a CW complex in which the attaching maps of the cells are homeomorphisms onto their images.
\end{defn}

\begin{defn} \cite{Harary69a}
A \emph{matroid} is a finite set $E$, called the \emph{ground set}, together with a family of subsets $I$ called \emph{independent sets} such that

\begin{itemize}
\item[1)] The empty set is independent.
\item[2)] If $A\subset B$ and $B$ is independent, then $A$ is independent.
\item[3)] If $A$ and $B$ are independent and $B$ has more elements than $A$, then there exists a $b\in B\setminus A$ such that $A \cup \{b\}$ is independent.
\end{itemize}

\noindent Subsets of $E$ that are not independent are called \emph{dependent}.
\end{defn}

\begin{ex}
The edges of a finite graph have the structure of a matroid, where subsets forming trees are independent. Note that the cycles of the graph are exactly the minimal dependent sets of the matroid.
\end{ex}

\begin{defn} \cite{blandlasvergnas} A \emph{signed set} $A$ is a map of sets whose target is $\{ +, -\}$. The \emph{support} of $A$ is its domain. We denote the preimage of $+$ and $-$ by $A^+$ and $A^-$ respectively, and we denote $A$ postcomposed with the involution on $\{-, +\}$ by $-A$. Thus for example, one can write $A^+= (-A)^-$.
\end{defn}

\begin{defn} \cite{blandlasvergnas}
\label{oriented matroid}
An \emph{oriented matroid} consists of a finite ground set $E$ together with a collection of signed sets $\mathscr{C}$. Each signed set in $\mathscr{C}$ is supported on a subset of $E$. The signed sets have the following properties.

\begin{itemize}
\item[1)] The empty set is not in $\mathscr{C}$.
\item[2)] If $A$ is in $\mathscr{C}$ then $-A$ is in $\mathscr{C}$.
\item[3)] If $A,B \in \mathscr{C}$ and $A \subseteq B$ in $\mathscr{C}$, then either $A = B$ or $A = -B$
\item[4)] For all $A,B \in \mathscr{C}$ such that $A \neq -B$ with an element $e$ in $A^+ \cap B^-$, there exists $Z$ in $\mathscr{C}$ such that $Z^+ \subset (A^+ \cup B^+)\setminus\{e\}$ and $A^- \subset (A^- \cup B^-)\setminus\{e\}$

\end{itemize}

\end{defn}

\begin{ex}
The edges of a finite directed graph have the structure of an oriented matroid as follows. A signed subset $A$ of edges is in $\mathscr{C}$ if it forms a cycle in the underlying undirected graph with the following property. The directions of edges in $A^+$  are consistent with an orientation for the cycle, while the directions of edges  in $A^-$ are inconsistent.\\
\end{ex}

\noindent An oriented matroid has an underlying (unoriented) matroid. The support of each signed set in $\mathscr{C}$ is a minimal dependent set of the underlying matroid.

\begin{defn}
\label{defn_pseudomanifold}
\cite{Brasselet-1996}
A topological space $X$ is called a $d$-\emph{pseudo-manifold with boundary} if there exists a triangulation $K$ with the following properties.
\begin{itemize}
\item[1)] $X$ is the union of all the simplices in $K$.
\item[2)] Every $(d-1)$-simplex is in the boundary of exactly one or two $d$-simplices. 
\item[3)] For every pair of $d$-simplices $\sigma$, $\sigma' \in K$ there is a sequence of $d$-simplices  $\sigma = \sigma_1\ldots \sigma_l=\sigma'$ such that $\sigma_i\cap\sigma_{i+1}$ is a $(d-1)$-simplex for all $i$.

\end{itemize}

\noindent A point in a topological space $X$ is called a $d$-\emph{pseudo-manifold point} if it has a neighborhood which is a pseudo-manifold with boundary

\end{defn}

\begin{figure}[h]
\begin{center}
\includegraphics[width=3in]{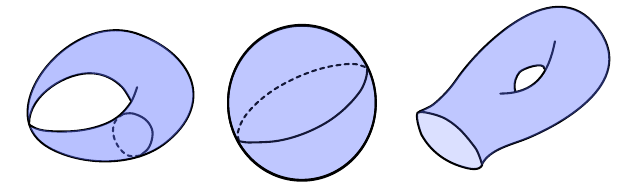}
\caption{Examples of two-dimensional pseudo-manifolds with boundary (individual simplices not shown).}
\end{center}
\end{figure}

\begin{figure}[h]
\begin{center}
\includegraphics[width=3in]{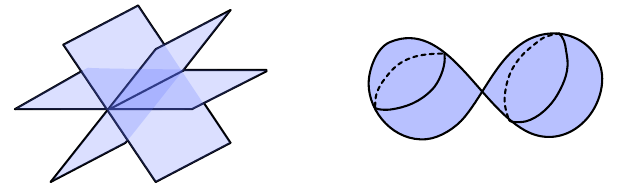}
\caption{Examples of simplicial complexes that are not pseudo-manifolds. The first example violates (2), while the second example violates (3). Individual simplices not shown.}
\end{center}
\end{figure}

\medskip\noindent Note that pseudo-manifolds, as referred to in this paper, are connected by definition.

\section{Strata and geometric matroid structure}
\label{matroid structure}

Let $X$ be a compact Hausdorff space underlying a finite simplicial complex of dimension $d$ (equivalently $X$ underlies a finite regular cell complex structure of dimension $d$, see  \cite{Steenrod67}).

\medskip\noindent Remove the $d$-pseudo-manifold points from $X$ to obtain a closed subset $X_{d-1}$. Then remove the $(d-1)$-pseudo-manifold points from $X_{d-1}$ to obtain $X_{d-2}$ and so on.
This produces a finite decreasing filtration of closed sets. These are subcomplexes (see \cite[Chapter I, Section 4]{Steenrod67}) for any regular cell complex structure on $X$. Note that $X_{d-1}$ is comprised of precisely the $(d-1)$-cells of $X$ that are in the intersection of more than two $d$-cells.

\medskip\noindent For every pair $(X_k, X_{k-1})$ there is a long exact sequence: 

\begin{equation*}
\ldots \xrightarrow{} H_{n+1}(X_k,X_{k-1}) \xrightarrow{\delta_{n+1}} H_{n}(X_{k-1}) \xrightarrow{} H_{n}(X_k) \xrightarrow{\iota_{n}} H_{n}(X_k,X_{k-1}) \xrightarrow{} \ldots
\end{equation*}

\medskip\noindent Using $\delta$ from the exact sequence for $(X_k,X_{k-1})$ and $\iota$ from the exact sequence for $(X_{k-1},X_{k-2})$, we get the following composition:

\begin{equation*}
H_{k}(X_{k},X_{k-1}) \xrightarrow{\delta_k} H_{k-1}(X_{k-1}) \xrightarrow{\iota_{k-1}}  H_{k-1}(X_{k-1},X_{k-1}).
\end{equation*}

\noindent Since $\delta$ is the boundary map, and $\iota$ is inclusion, the composition $\partial_{k} := \iota_{k-1} \circ \delta_k$ is a \emph{boundary} map $H_k (X_k,X_{k-1}) \xrightarrow{} H_{k-1}(X_{k-1},X_{k-2})$. We obtain and the following sequence of groups and morphisms

\begin{equation}
\label{chain}
\ldots \xrightarrow{} H_{k+1}(X_{k+1},X_k) \xrightarrow{\partial_{k+1}} H_k(X_k,X_{k-1}) \xrightarrow{\partial_{k}} H_{k-1}(X_{k-1},X_{k-2}) \xrightarrow{} \ldots
\end{equation}

\begin{defn}
We call the connected components of the set of $k$-pseudo-manifold points of $X_k$ the $k$-\emph{strata} of $X$. In other words, a $k$-\emph{stratum} is a connected component of $X_k \setminus X_{k-1}$.
\end{defn}

\begin{figure}[H]
\label{fig_strata}
\begin{center}
\includegraphics[width=4in]{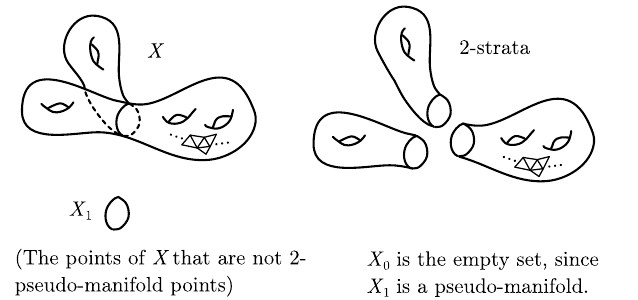}
\caption{A two-dimensional simplicial complex $X$ is shown, followed by its $2$-strata, and $X_1$.}
\end{center}
\end{figure}

\medskip\noindent It may be instructive to see how to a stratum is computed, see Algorithm \ref{alg_stratum} in Section \ref{algorithm}.

\begin{defn} A $k$-stratum is called \emph{orientable} if the $k^{\text{th}}$ homology of its closure in $X_k$, relative its boundary is infinite cyclic (as opposed to $\mathbb{Z}_2$)
\end{defn}

\medskip\noindent One can think of $X$ as made up of strata, with each stratum having an attaching map and a boundary, analogous to a CW complex structure. In the proof of the main theorem below, one sees that the orientable $k$-strata correspond to  natural generators, up to a sign, of  the groups $H_k(X_k, X_{k-1})$, and $\partial_k$ correspond to boundary maps.


\begin{Theorem}

\label{thm:main}

The filtration $X=X_{d} \supseteq X_{d-1} \supseteq \ldots$ has the following properties:
\begin{itemize}
\item[(i)] The maps induced by the exact sequences of pairs make (\ref{chain}) into a chain complex whose top homology is isomorphic to the top homology of $X$.
 \item[(ii)] With real coefficients, each group $C_k := H_k(X_k,X_{k-1})$ has the structure of a coordinate space  whose  codimension one hyperplanes are defined by elements of $C_k$ not supported on a particular $k$-stratum.
 
 \item[(iii)] The set of $k$-strata with chosen orientations has the structure of an oriented matroid.
 \item[(iv)] Any homeomorphism between two such spaces preserving orientations induces an isomorphism between the above structures.
\end{itemize}

\end{Theorem}

\begin{proof}
We prove each part of Theorem \ref{thm:main} in order.
 \begin{itemize}
 \item[i)] We show (by a familiar argument) that (\ref{chain}) is a chain complex. The map $\partial_k$ is the composition of $H_{k+1}(X_{k+1},X_k) \xrightarrow{\delta_{k+1}} H_{k}(X_{k})$ and $H_{k}(X_{k}) \xrightarrow{\iota_k} H_{k}(X_{k}, X_{k-1})$. So $\partial_k \partial_{k-1} = \iota_{k-1} \circ \delta_k \circ \iota_{k-2} \circ \delta_{k-1}$. Since $(\delta)^2 = 0$, and $\iota$ is simply inclusion, we have $\partial^2=0$ and the sequence (\ref{chain}) of groups and morphisms forms a chain complex.\\ \\
   To find the top homology, consider the following part of the chain complex,
    \begin{equation*}
     0 \xrightarrow{} H_d(X_d,X_{d-1}) \xrightarrow{\partial_d} H_{d-1}(X_{d-1},X_{d-2}).
    \end{equation*}
   It is easily seen that the top homology group is $\text{ker} (\partial_d)$. Now, $\partial_d$ is the composition of the following maps which  come from two different exact sequences,
    \begin{equation*}
     H_d(X_d,X_{d-1}) \xrightarrow{\delta_d} H_{d-1}(X_{d-1}) \xrightarrow{\iota_{d-1}} H_{d-1}(X_{d-1},X_{d-2}).
    \end{equation*}
   The map $\iota_{d-1}$ is injective, as its kernel is equal to the image under a map from $H_{d-1}(X_{d-2}) = 0$ (it is, afterall an inclusion). Thus $\text{ker}\partial_d= \text{ker} \delta_d$. This is the image under the map $\iota_d$ from $H_d(X_d) = H_d(X)$ in the following exact sequence,
    \begin{equation*}
     0 \xrightarrow{} H_d(X) \xrightarrow{\iota_d} H_d(X_d,X_{d-1}) \xrightarrow{} \ldots
    \end{equation*}
   This image, in turn, is isomorphic to $H_d(X)$ and hence the top homology of chain complex (\ref{chain}) is the same as the top homology of the topological space $X$.
 
  \item[ii)] $X_k$ has an underlying cell complex structure which gives rise to chain groups $A_*$ ($\mathbb{R}$ coefficients). $H_k(X_k,X_{k-1})$ is the top homology group for the relative chain complex,

  \begin{equation}
    \label{relative}
    0 \xrightarrow{} A_k(X_k)/A_k(X_{k-1}) \xrightarrow{\delta'_k} A_{k-1}(X_k)/A_{k-1}(X_{k-1}) \xrightarrow{} \ldots
  \end{equation}

    So $H_k(X_k,X_{k-1}) = \text{ker}\delta'_k$  and is a free abelian group for all $k$. Now we construct a basis up to  multiplication by real scalars which gives rise to a coordinate space structure on these relative homology groups.

   For each orientable $k$-stratum, we construct a formal sum of oriented $k$-cells in $A_k$ whose interiors are contained in this stratum. The coefficient of each cell is $+1$ or $-1$, chosen so that the boundary of the formal sum only has $(k-1)$-cells which are in $X_{k-1}$. Hence these sums are  in $\text{ker} \delta'_k$. Note that each $k$-cell of $X_k$ appears in the formal sum of exactly one $k$-stratum. This implies that the sums for the various strata form a linearly independent set in the vector space of $k$-cells. It remains to show that every chain in $X_k$ whose boundary is in $X_{k-1}$ is a linear combination of the sums constructed above.

   Let $B$ be a chain in $X_k$ whose boundary is contained in $X_{k-1}$. Let $\sigma$ be a cell appearing with multiplicity $m$ in $B$ and in the formal sum for the $k$-stratum $\mathscr{S}$. If $\tau$ is another cell in $\mathscr{S}$, it is connected to $\sigma$ by a sequence of $k$-cells in $\mathscr{S}$ with consecutive cells sharing a $(k-1)$-face not in $X_{k-1}$ (by (3) in Definition \ref{defn_pseudomanifold}). This gives a path of alternating $k$- and $(k-1)$-cells from $\sigma$ to $\tau$. There may be  several such sequences. For any such sequence, each consecutive $k$-cell must appear in $B$ with multiplicity $m$ and appropriate sign so that the boundary of $B$  is not supported on the $(k-1)$-cells that came before it in the sequence. This is true whether or not $\mathscr{S}$ is orientable.
   
\begin{figure}[h]
\label{fig_pathofcells}
\begin{center}
\includegraphics[width=2in]{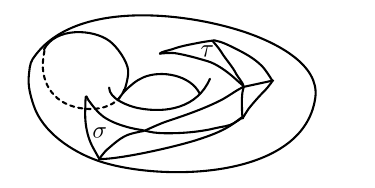}
\caption{An example of a simplicial complex, showing triangles $\sigma$ and $\tau$ and a sequence of alternating triangles and edges connecting them.}
\end{center}
\end{figure}

   Suppose $\mathscr{S}$ is not orientable. Two connecting sequences exist such that the  orientations on a cell $\tau$ induced from a given orientation of some cell $\sigma$ by these two sequences do not agree. Therefore, the multiplicity of $\tau$ in $B$ must be both $+m$ and $-m$. So $m$ must be zero.

   If $\mathscr{S}$ is orientable, the above discussion shows that the entire sum associated with $\mathscr{S}$ appears in $B$ with multiplicity $m$ (respectively with multiplicity zero if $\mathscr{S}$ is not orientable).

   This proves that every element in $C_k$ is a linear combination of oriented strata. A particular co-dimension one hyperplane of the coordinate space structure on $C_k$ consists of those elements which have zero coefficient on a particular stratum.


  \item[iii)] The ground set $E$ of our oriented matroid is the set of oriented $k$ strata. Now, associate to each $k$ stratum a vector in $C_{k-1}$ which corresponds to its boundary. Independence in the matroid is defined as the linear independence of the corresponding boundary vectors in $C_{k-1}$. A signed set $X$ supported on $E$ is in $\mathscr{C}$ if and only if $\{ \partial x \mid x \in X\} \in C_{k-1}$ is minimally linearly dependent, and a dependence relation can be written such that the strata whose boundary vectors appear with positive coefficients form $X^+$ and the strata whose boundary vectors appear with negative coefficients form $X^-$. Note that a minimal linearly dependent set has a dependence relation that is well defined up to scalar multiplication.

We briefly check each of the four conditions in the definition of an oriented matroid.
  
  \begin{enumerate}
\item The empty set $\emptyset \in E$ has boundary $\partial \emptyset$ which is equal to the empty set of vectors in $C_{k-1}$. It is vacuously linearly independent, and so $\emptyset$ is not in $\mathscr{C}$.
\item If $A \in \mathscr{C}$, then a linear combination of the strata in $X$ has zero boundary. If we multiply this linear combination by $-1$ we find that $-A \in \mathscr{C}$.
\item If $A, B \in \mathscr{C}$ and $A \subset B$ we must have $A=B$ or $A=-B$ by minimality of $B$.
\item If $A, B \in \mathscr{C}$, $A\neq -B$, and there is an element $e$ in $A^+ \cup B^-$, it is possible to create a new element $Z$ in $\mathscr{C}$ that does not contain $e$ in its support. This is done by solving for $\partial e$ in dependence relations for $A$ and $B$ and subtracting one from another to form a new dependence relation. The vectors in this new relation are a minimal linearly  independent set and the strata whose boundary vectors are summands in this relation form $Z$. It can be checked that $Z$ has the sign properties given in Definition \ref{oriented matroid}.
\end{enumerate}

   \item[iv)] A homeomorphism from $Y$ to $X$ restricts to a homeomorphism from $Y_k$ to $X_k$ for every $k$, where $Y_k$ is obtained from $Y$ the same way in which $X_k$ is obtained from $X$. An isomorphism is induced between the two chain complexes associated with the pairs $(Y_k,Y_{k-1})$ and $(X_k,X_{k-1})$. This, in turn, induces an isomorphism between $H_k(Y_k,Y_{k-1})$ and $H_k(X_k,X_{k-1})$ for every $k$, which respects the boundary maps. Thus there is an isomorphism between the two chain complexes preserving the coordinate space structure.   \end{itemize}

\end{proof}

\medskip\noindent Recall that the top homology group $H_d(X)$ is  equal to the group of cycles in the top dimension. By the proof of Theorem \ref{thm:main} $(ii)$, these cycles  inject into $H_d(X_d, X_{d-1})$. Theorem \ref{thm:main} $(i)$ shows that every element of $H_d(X_d, X_{d-1})$ corresponds to the linear combination of $d$-strata. Together, these facts give the following corollary, which is illustrated in Figure \ref{fig_strata}.

\begin{Cor}
The top-dimensional cycles can be written as  linear combinations of top-dimensional strata.
\end{Cor}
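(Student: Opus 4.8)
The plan is to deduce the corollary directly from parts~(i) and~(ii) of Theorem~\ref{thm:1}, after making the identifications in top degree completely explicit. Since $X=X_d$ and the subcomplex $X_{d-1}$ has dimension at most $d-1$, there are no $d$-cells in $X_{d-1}$; hence $A_d(X_{d-1})=0$, the top term of the relative complex~(\ref{relative}) is $A_d(X)$ itself, and $C_d=H_d(X_d,X_{d-1})=\ker\delta$ is exactly the subspace of $A_d(X)$ consisting of cellular $d$-chains whose boundary is supported on $X_{d-1}$. Because $X$ has no cells in dimensions above $d$, the space $Z_d(X)$ of cellular $d$-cycles equals $H_d(X)$, and at chain level the natural map $H_d(X)=H_d(X_d)\to H_d(X_d,X_{d-1})=C_d$ is simply the inclusion $Z_d(X)\hookrightarrow\ker\delta$ of one subspace of $A_d(X)$ into another; this recovers the injectivity used in the proof of Theorem~\ref{thm:1}(ii) (its kernel is the image of $H_d(X_{d-1})=0$).

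The second step is to invoke Theorem~\ref{thm:1}(i), which exhibits a spanning set of $C_d$ consisting of one distinguished chain for each orientable $d$-stratum $\mathscr{S}$ of $X$, namely the signed sum of the $d$-cells lying in $\mathscr{S}$ determined by a chosen orientation of $\mathscr{S}$. Expanding a given top-dimensional cycle $z\in Z_d(X)\subseteq C_d$ in this spanning set expresses $z$ as an honest cellular chain that is a linear combination of the oriented $d$-strata, with the non-orientable strata forced to appear with coefficient zero. That is the whole statement.

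The only point requiring any care is interpretational: the corollary asserts that a top cycle, which a priori is a homology class, can be written literally as a chain supported on the strata, and this is legitimate precisely because in top degree the relative group $C_d$ is canonically the absolute group of $d$-chains $\ker\delta\subseteq A_d(X)$, with no passage to a quotient. The main, and very mild, obstacle is therefore just to notice that the argument of part~(i) already takes place inside $A_d(X)$; granting that, the corollary is immediate.
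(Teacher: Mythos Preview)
Your proposal is correct and follows essentially the same approach as the paper: inject $H_d(X)=Z_d(X)$ into $C_d=H_d(X_d,X_{d-1})$ via the long exact sequence of the pair (the injectivity coming from $H_d(X_{d-1})=0$, as in the proof of Theorem~\ref{thm:1}(ii)), and then apply Theorem~\ref{thm:1}(i) to write every element of $C_d$ as a linear combination of the oriented $d$-strata. Your write-up makes explicit the chain-level identification $C_d=\ker\delta\subseteq A_d(X)$ and the interpretational point that no quotient is taken in top degree, but this is elaboration rather than a different route.
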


\begin{figure}[H]
\label{fig_cycleslincomb}
\begin{center}
\includegraphics[width=4in]{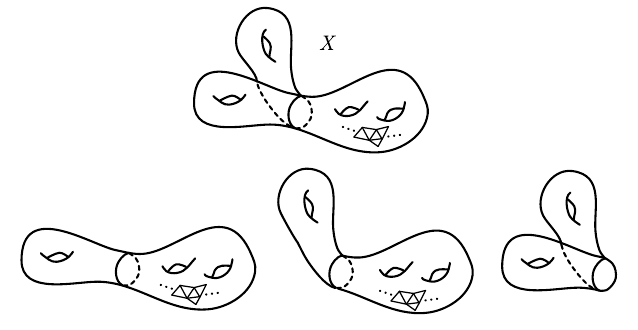}
\caption{A two-dimensional simplicial complex $X$, followed by a basis of its two-dimensional cycles, each element of which is shown as a sum of $2$-strata. Compare with Figure \ref{fig_strata}.}
\end{center}
\end{figure}

\medskip\noindent The chain complex (\ref{chain}) of coordinate spaces is a topological invariant  of the underlying space which, with some enrichment, becomes a complete topological invariant in dimension one and dimension two.

\begin{rmk}
\label{rmk_completeinvariant1}
In dimension one, the connected components of  $1$-pseudo-manifold points are either circles or open intervals. Thus for connected non-trivial spaces, the one-dimensional strata are either the open edges of a graph or a single circle. The circle case is specified by having no degree zero term in the chain complex (\ref{chain}). The graph case for connected spaces of dimension one is  essentially determined by the above coordinate space chain complex. For such a graph, $X_1$ is a finite collection of open intervals and $X_0$ is a finite collection of points. Every open interval and every  vertex  corresponds to a generator of the respective chain groups. The boundary map takes the generator corresponding to an interval to the difference of the generators corresponding to its ending vertices. Thus for every such open interval, which does not form a loop after being attached, the above chain complex completely determines its position in the graph. Therefore in either case (circle or graph) the above chain complex, along with a function giving the number of loops attached to every vertex, is a complete topological invariant for connected compact one-dimensional triangulable spaces.

\begin{figure}[H]
\label{fig_completeinvariant1}
\begin{center}
\includegraphics[width=4in]{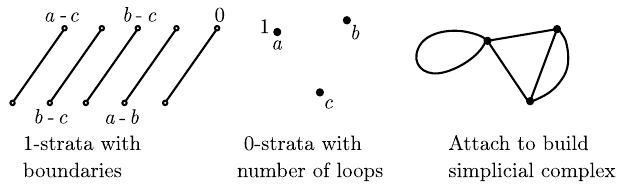}
\caption{Illustration of how to reconstruct $X$ from chain complex (\ref{chain}).}
\end{center}
\end{figure}
\end{rmk}

\begin{rmk}
\label{rmk:taut}
The two-dimensional case proceeds similarly and is a generalization of the one-dimensional case.  
The $1$-skeleton is determined as described in Remark \ref{rmk_completeinvariant1}. The number of two-dimensional strata are specified by the chain complex (\ref{chain}), as in dimension one. We reconstruct the space by attaching the $2$-strata to the $1$-skeleton, much like how one attaches $2$-cells to a $1$-skeleton. The support of the attaching maps is determined by the boundary map. We restrict attention to spaces with locally injective strata-attaching maps (we note that all $2$-complexes are homotopy equivalent to such a complex). In this case, the only additional information needed to determine the attaching maps is the cyclic order in which each boundary component visits the support of the attaching map. Therefore, enriching the chain complex with the topological type of each $2$-stratum, as well as a cyclic word for each of its boundary components (whose letters are the $1$-strata in the support of the boundary) gives a complete homeomorphism invariant of $2$-complex. We leave to the reader the challenge to further develop this picture.

\begin{figure}[H]
\label{fig_completeinvariant2}
\begin{center}
\includegraphics[width=4in]{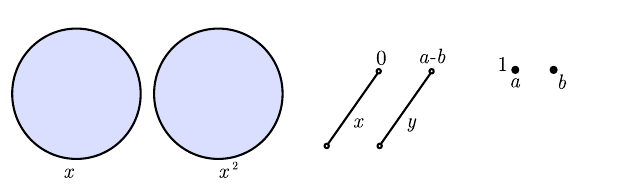}
\caption{The information given by chain complex (\ref{chain}) is shown visually in this example. Each stratum is shown with its boundary information. The enrichment needed to recover the simplicial complex is the the topological type of the $2$-strata (disks) and the $1$ decorating $a$. This denotes that one loop is attached at this $0$-stratum. Assembling these strata according to their boundary information yields a projective plane with a disk attached along a non-trivial loop, with an edge protruding from a point somewhere along this loop.}
\end{center}
\end{figure}
\end{rmk}

\medskip \noindent This concludes the abstract results discussed in this paper. In the following section, we show how these results may be applied algorithmically.

\section{Using the matroid structure to find a minimal support basis for top homology}
\label{algorithm}

We use the matroid structure from Theorem \ref{thm:main} to describe an algorithm that, given a simplicial complex, computes a basis for the top-dimensional homology. Additionally, every element of the computed basis has minimal support in the following sense. No proper subset of a basis element supports a top-dimensional cycle. The algorithm will use the matroid structure in the top two dimensions. Since the ground set of the matroid is the set of orientable strata, we first need to construct the set of strata from the simplices. Our algorithm uses boundary computations. For simplicial complexes, every cell is completely determined by the set of its vertices, and its boundary is determined by subsets that each exclude a vertex. We note that for regular cell complexes, which are determined by the poset of cell inclusions with dimensions attached, the boundary of each cell is part of  this structure. Thus the algorithm can be modified to take regular cell complexes as input.

\medskip\noindent More precisely, let $X$ be a $d$-dimensional simplicial complex, given as a set of vertices $V$ and a set of subsets $\mathscr{V}$ (that each span a simplex of appropriate dimension). The elements of $V$ are ordered, inducing orientations (orderings up to even permutations) on the elements of $\mathscr{V}$. If $\sigma \subset \mathscr{V}$ is a $k$-simplex, $|\sigma|=k+1$. We denote by $-\sigma$ the simplex $\sigma$ taken with opposite orientation. Its boundary $\partial(\sigma)$ is defined as the set of all the subsets of $\sigma$ of size $k$, each with orientation induced by the orientation of $\sigma$ (see \cite{Steenrod67} for details). For a $d$-dimensional simplex $\sigma \subset \mathscr{V}$, we find the stratum $e$ that contains $\sigma$ and determine whether it is orientable as follows.

\begin{algorithm}[H]
\caption{}
\begin{algorithmic}
\label{alg_stratum}
\STATE $e\leftarrow \{\sigma\}$

\STATE $L\leftarrow  \varnothing$

\STATE $P \leftarrow \partial(\sigma)$

\STATE Create a hash table which associates each $(d-1)$-cell to all the $d$-cells that have it in its boundary.

\WHILE{$P$ is not $\varnothing$}

\STATE Pick $\tau$ in $P$.

\IF{there are exactly two cells, $\sigma_1$ (already in $e$) and $\sigma_2$, associated to $\tau$ in the hash table}

\STATE $e \leftarrow e \cup\{-\sigma_2\}$
\STATE $P \leftarrow P \cup \partial(-\sigma_2) $

\ELSE
\STATE $L \leftarrow L \cup \{\tau\}$

\ENDIF

\STATE $P \leftarrow P \backslash (\{\tau\}\cup L)$

\ENDWHILE

\STATE Pick $\sigma$ in $e$

\IF {$-\sigma$ is in $e$}
\STATE $o_e \leftarrow $ non-orientable

\ELSE
\STATE $o_e \leftarrow $ orientable

\ENDIF

\end{algorithmic}
\end{algorithm}




\medskip\noindent The algorithm moves locally between adjacent $d$-dimensional cells, determining whether they fall in the same stratum or not. Two adjacent $d$-cells are in the same stratum when they are the only cells whose boundary contains some $(d-1)$-cell. This computation is seen in the first if loop. As the algorithm adds cells to $e$, the set $P$ is the boundary of $e$ across which we attempt to extend the stratum. In this sense, $P$ is the set of $(d-1)$-cells that need to be processed. Eventually either the stratum is extended across each element of $P$, or the element is sent to $L$, which consists of the boundary of $e$ at the end of the algorithm.

\medskip \noindent If a stratum is non-orientable, it is impossible to assign signs to the cells, in a consistent way. In Algorithm \ref{alg_stratum} this is realized by the fact that if $\sigma$ is in $e$ then $-\sigma$ is forced to be in $e$ as well. We use this fact to determine the orientability of the stratum.

\medskip \noindent We note that Algorithm \ref{alg_stratum} terminates. At the end of each iteration of the while loop, no element that has previously been in $P$ will have been added back. Since we are looking at a finite complex, the while loop will eventually terminate. At the end of the above algorithm $e$ will be the cells forming a strata and $L$ will be the cells forming the boundary of $e$.

\medskip\noindent The hash table is computed by taking each $d$-cell and associating it to the $(d-1)$-cells in its boundary. This process is linear in the number of $d$-cells. With this table in place, each cell in $P$ takes a fixed amount of time to process. This part of the algorithm is linear in the number cells that get processed in $P$. Thus the process of finding all the $d$-strata and their boundaries is linear in the total number of cells.


\medskip\noindent To compute a basis for the top homology, we also need the $(d-1)$-strata, which we construct similarly. Recall that the $(d-1)$-strata are supported on the boundaries of the orientable $d$-strata. Thus, when constructing the $(d-1)$-strata, it suffices to consider only those $d-1$-cells that are in the boundaries of the orientable $d$-strata. We repeat the above process with these $(d-1)$-cells to compute the corresponding $(d-1)$-strata. 


\medskip\noindent We now outline an algorithm for computing a basis of minimal support $d$-cycles (of a $d$-dimensional simplicial complex $X$), given its $d$-strata and $(d-1)$-strata. We use the matroid with the ground set $E= \{\text{orientable } d \text{-strata}\}$. In what follows, all coefficients are rational numbers. Once we have a cycle with rational coefficients, we can multiply it by an appropriate integer factor to get relatively prime integer coefficients. It is helpful to recall that the boundary of a $d$-stratum $\sigma$ is a linear combination of $(d-1)$-strata. By slight abuse of notation, we denote the vector in $C_{d-1}$ given by this linear combination $\partial(\sigma)$. Further, we suggest the reader bear in mind Example \ref{graph} while reading Algorithm \ref{find_cycles}. Note that when Algorithm \ref{find_cycles} is applied to a graph, the procedure from Example \ref{graph} is obtained.

\begin{algorithm}[H]
\caption{}
\begin{algorithmic}
\label{find_cycles}
\STATE $M \leftarrow \varnothing$
\STATE $S \leftarrow \varnothing$
\STATE $E \leftarrow \{\text{orientable } d \text{-strata}\}$

\FOR {all $e \in E$}
\STATE Compute matrix $R$ whose columns are $\{\partial(e)\}\cup \{\partial(m) \text{ such that } m \in M\}$ with respect to the basis for $C_{d-1}$ given by $(d-1)$-strata (see Theorem \ref{thm:main}, (ii)). 

\STATE $D\leftarrow$ The minimal dependence relation in $R$ over the integers obtained by computing its row echelon form. 

\IF {$D$ is $0$}
\STATE $M\leftarrow M\cup \{e\}$
\ELSE 

\STATE $S\leftarrow S\cup \{D\}$

\ENDIF
\ENDFOR

\end{algorithmic}
\end{algorithm}





\medskip \noindent The algorithm adds strata to $M$ as long as $M$ remains an independent set of the matroid $E$. Each time a stratum $e$ cannot be added to $M$, the algorithm finds the cycle created in $M \cup \{e\}$ and adds it to $S$. At the end of the algorithm, the set $M$ is a maximal independent subset of $E$ and $S$ is a basis of cycles for the top-dimensional homology group. In fact, due to the independence of $M$ at all times during Algorithm \ref{find_cycles}, these cycles are minimally supported on the strata and hence this is a minimally supported basis.

\medskip \noindent During each iteration of the for loop, the row echelon form can be computed by adding a new column corresponding to $\partial(e)$, to a previously row-echelonized matrix. This process is quadratic in the number of $(d-1)$-strata and thus the whole algorithm is cubic in the number of $(d-1)$-cells. Therefore the computation of a homology basis (Algorithm \ref{alg_stratum} followed by Algortihm \ref{find_cycles}) is cubic in the number of cells in $X$.


\medskip\noindent We note that the structure of an oriented matroid provides some efficiency for the simplex algorithm \cite{BlandSimplex}. Since the matroid structure defined here is indeed an oriented, the simplex algorithm can likely be used for further applications.

\medskip\noindent Finally, we note that this greedy algorithm solves a higher-dimensional analogue of the minimum spanning tree problem. If the for loop in Algorithm \ref{find_cycles} goes through the elements of $E$ in order of increasing weight, then at the end of the computation, $M$ will be a minimal weight, maximal acyclic set of strata.





\nocite{*}
\bibliography{resubmission}

\begin{thebibliography}{10}

\bibitem{orientedmatroids}
Anders Bj\"{o}rner, Michel Las~Vergnas, Bernd Sturmfels, Neil White, and
  G\"{u}nter~M. Ziegler.
\newblock {\em Oriented Matroids}.
\newblock Cambridge University Press, 2 edition, 1999.
\newblock See page 10.

\bibitem{BlandSimplex}
Robert~G Bland.
\newblock New finite pivoting rules for the simplex method.
\newblock {\em Math. Oper. Res.}, 2(2):103--107, May 1977.

\bibitem{blandlasvergnas}
Robert~G Bland and Michel~Las Vergnas.
\newblock Orientability of matroids.
\newblock {\em Journal of Combinatorial Theory, Series B}, 24(1):94 -- 123,
  1978.

\bibitem{Minimum-cycle-bases-2017}
Glencora Borradaile, Erin~Wolf Chambers, Kyle Fox, and Amir Nayyeri.
\newblock Minimum cycle and homology bases of surface-embedded graphs.
\newblock {\em JoCG}, 8(2):58--79, 2017.

\bibitem{Brasselet-1996}
J.P. Brasselet.
\newblock Intersection of algebraic cycles.

\bibitem{BDMZ}
Piotr Brendel, Pawel Dlotko, Marian Mrozek, and Natalia Zelazna.
\newblock Homology computations via acyclic subspace.
\newblock In {\em {CTIC}}, volume 7309 of {\em Lecture Notes in Computer
  Science}, pages 117--127. Springer, 2012.

\bibitem{ChenFreedman}
Chao Chen and Daniel Freedman.
\newblock Measuring and computing natural generators for homology groups.
\newblock {\em Computational Geometry}, 43(2):169 -- 181, 2010.

\bibitem{Steenrod67}
George~E. Cooke and Ross~L. Finney.
\newblock {\em Homology of Cell Complexes}.
\newblock Princeton University Press, 1967.

\bibitem{Efficient-algorithms-2018}
Tamal~K. Dey, Tianqi Li, and Yusu Wang.
\newblock Efficient algorithms for computing a minimal homology basis.
\newblock {\em CoRR}, abs/1801.06759, 2018.

\bibitem{Greedy-optimal-homotopy-2005}
Jeff Erickson and Kim Whittlesey.
\newblock Greedy optimal homotopy and homology generators.
\newblock In {\em Proceedings of the Sixteenth Annual ACM-SIAM Symposium on
  Discrete Algorithms}, SODA '05, pages 1038--1046, Philadelphia, PA, USA,
  2005. Society for Industrial and Applied Mathematics.

\bibitem{folkmanlawrence}
Jon Folkman and Jim Lawrence.
\newblock Oriented matroids.
\newblock {\em Journal of Combinatorial Theory, Series B}, 25(2):199 -- 236,
  1978.

\bibitem{ford-fulkerson}
L.~R. Ford~Jr. and D.~R. Fulkerson.
\newblock Maximal flow through a network.
\newblock {\em Canadian Journal of Mathematics}, 8:399--404, 1956.

\bibitem{FIdF}
U.~{Fugacci}, F.~{Iuricich}, and L.~D. {Floriani}.
\newblock Efficient computation of simplicial homology through acyclic
  matching.
\newblock In {\em 2014 16th International Symposium on Symbolic and Numeric
  Algorithms for Scientific Computing}, pages 587--593, Sep. 2014.

\bibitem{ggms}
I.~Gelfand, R.~Goresky, R.~Macpherson, and V.~Serganova.
\newblock Combinatorial geometries, convex polyhedra, and schubert cells.
\newblock {\em Advances in Mathematics}, 63(3):301--316, 1987.
\newblock See for the use of matroids to label cells in the Grassmanian.

\bibitem{Harary69a}
Frank Harary.
\newblock {\em Graph Theory}.
\newblock Addison-Wesley Series in Mathematics. Addison Wesley, 1969.

\bibitem{Munkres}
James~R. Munkres.
\newblock {\em Elements of algebraic topology}.
\newblock Addison-Wesley, 1984.

\bibitem{matroidtheory}
James~G. Oxley.
\newblock {\em Matroid Theory (Oxford Graduate Texts in Mathematics)}.
\newblock Oxford University Press, Inc., New York, NY, USA, 2006.

\bibitem{whitehead}
J.~H.~C. Whitehead.
\newblock Combinatorial homotopy {II}.
\newblock {\em Bulletin of the American Mathematical Society}, 55(5):453--496,
  1949.

\bibitem{whitehead2}
J.H.C. Whitehead.
\newblock On simply connected, 4-dimensional polyhedra.
\newblock {\em Commentarii mathematici Helvetici}, 22:48--92, 1949.

\bibitem{whitney}
Hassler Whitney.
\newblock On the abstract properties of linear dependence.
\newblock {\em American Journal of Mathematics}, 57(3):509--533, 1935.

\end{thebibliography}
\bibliographystyle{plain}

\end{document}